\title{Domination by kings is oddly even}
\author{Cristopher Moore\authornote{1}
\and
Stephan Mertens\authornote{1,2}
}
\email{moore@santafe.edu}).}
\email{mertens@ovgu.de}).}
\begin{document}

\maketitle

% ABSTRACT
% E-JC papers must include an abstract. The abstract should consist of a
% succinct statement of background followed by a listing of the principal
% new results that are to be found in the paper. The abstract should be
% informative, clear, and as complete as possible. Phrases like
% "we investigate..." or "we study..." should be kept to a minimum in
% favor of "we prove that..."  or "we show that...".  Do not include equation
% numbers, unexpanded citations (such as "[23]"), or any other references
% to things in the paper that are not defined in the abstract. The abstract
% may be distributed without the rest of the  paper so it must be entirely
% self-contained.  Try to include all words and phrases that someone
% might search for when looking for your paper.

\begin{abstract}
The $m \times n$ king graph consists of all locations on an $m \times n$ chessboard, where edges are legal moves of a chess king. 
%where each vertex represents a square on a chessboard and each edge is a legal move. 
Let $P_{m \times n}(z)$ denote its domination polynomial, i.e., $\sum_{S \subseteq V} z^{|S|}$ where the sum is over all dominating sets $S$. We prove that $P_{m \times n}(-1) = (-1)^{\lceil m/2\rceil \lceil n/2\rceil}$. In particular, the number of dominating sets of even size and the number of odd size differs by $\pm 1$. 
%The numbers can not be equal because the total number of dominating sets is always odd. 
This property does not hold for king graphs on a cylinder or a torus, or for the grid graph. But it holds for $d$-dimensional kings, where $P_{n_1\times n_2\times\cdots\times n_d}(-1) = (-1)^{\lceil n_1/2\rceil \lceil n_2/2\rceil\cdots \lceil n_d/2\rceil}$.
\end{abstract}

\section{Introduction}

A placement of chess pieces on a chessboard is called \emph{dominating} if each free square of the chessboard is under attack by at least one piece. chess domination problems have been studied at least since 1862, when Jaenisch~\cite{jaenisch:62} posed the problem of finding the minimum number of queens needed to dominate the $8\times 8$ board. For queens, the answer is 5~\cite{watkins:12}. For kings, the answer is 9 (Fig.~\ref{fig:kings-graph}).

\begin{figure}[ht]
 \begin{tabular}{p{0.5\textwidth}p{0.5\textwidth}}  
  \begin{center}
%    \fenboard{8/1k2k2k/8/8/1k2k2k/8/8/1k2k2k - - - 0 0}
    \chessboard[setfen=8/1k2k2k/8/8/1k2k2k/8/8/1k2k2k - - - 0 0,
    label=false, showmover=false]
%    $$\showboard$$
  \end{center} &
   \vspace{6.5mm}
   \begin{center}
     \includegraphics[width=0.362\textwidth]{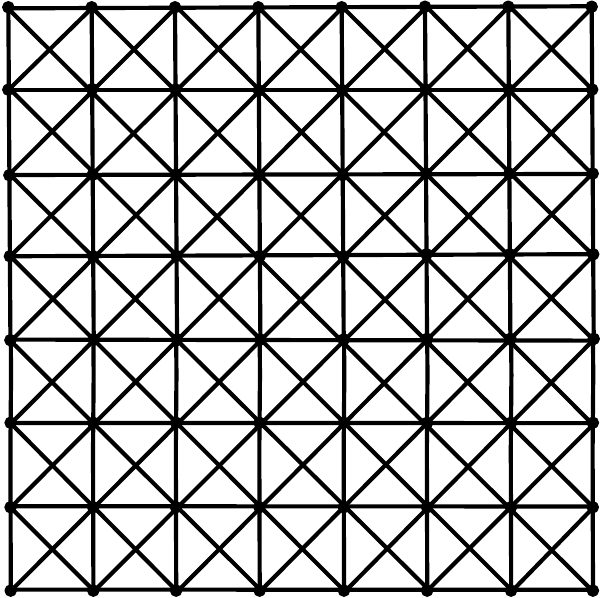}
  \end{center}\\[-1ex]
\end{tabular}
\caption{Nine kings are required to dominate the $8\times 8$
  chessboard. More mathematically: the smallest dominating set of the
  king graph $K_{8\times 8}$ (right) has size $9$.}
\label{fig:kings-graph}
\end{figure}

In the 20th century, the idea of domination entered graph theory~\cite{haynes:hedetniemi:slater:98}.
A dominating set in a graph $G = (V , E)$ is a subset $S \subseteq V$
of vertices such that every node in $V$ is either contained in $S$ or
has a neighbor in $S$.

The link from chess to graph theory is given by graphs like the king
graph (Fig.~\ref{fig:kings-graph}).  In this graph, which we call
$K_{m \times n}$, the vertices represent the squares of an
$m \times n$ chessboard, and each edge represents a legal move of a
king. That is, $K_{m\times n}=(V,E)$ where $V = [m] \times [n]$ and
$((x,y),(x',y')) \in E$ if and only if $\max(|x-x'|,|y-y'|)=1$. Graphs
for other chess pieces can be defined accordingly. For instance,
Rudrata showed in the 9th century C.E. that the $8 \times 8$ knight
graph is Hamiltonian~\cite{dasgupta:algorithms}.

The \emph{domination polynomial} of a graph $G=(V,E)$ is the
generating function of its dominating sets with respect to their size, i.e., 
\begin{equation}
  \label{eq:def_domination_polynomial}
  P_G(z) = \sum_{\substack{S \subseteq V\\ \textrm{$S$ dominating}}} z^{|S|} \;=\; \sum_{k=0}^{|V|} N_k \,z^k\, ,
\end{equation}
where $N_k$ is the number of dominating sets of size $k$. 
Like other graph polynomials, the domination polynomial encodes many
interesting properties of a graph~\cite{akbari:alikhani:peng:10}.  

A closed form of the domination polynomial is known only for a few
very simple graphs like the complete graph, the wheel graph, and the
star graph~\cite{alikhani:peng:14}. Recently, the $m \times n$ rook
graph from chess was added to this list~\cite{mertens:24a}. As far as
we know, the rook is the only chess piece for which this has been
achieved.

In this contribution we consider the domination polynomial
$P_{m \times n}(z)$ of the $m \times n$ king graph.  One of us has
computed $N_k$ and thus $P_{m \times n}(z)$ by exact enumeration for
all $m, n \leq 22$~\cite{mertens:24b}, and the data shows an
interesting pattern when $z=-1$: namely, $P_{m \times n}(-1) = \pm 1$.  Since
\[
P_{m \times n}(-1) = \sum_{\textrm{$k$ even}} N_k - \sum_{\textrm{$k$ odd}} N_k \, , 
\]
this means that the parity of dominating sets is almost perfectly
balanced. Indeed, it is as balanced as it can be, since the total
number of dominating sets in any finite graph is
odd~\cite{brouwer:csorba:schrijver:09}. Specifically, we show that
$P_{m \times n}(-1) = (-1)^{\lceil m/2\rceil \lceil n/2\rceil}$ as in
Table~\ref{tab:kings_skewness}. Thus the number of dominating sets of
odd size is one greater than those of even size if $m$ and $n$ are
both $1$ or $2$ mod $4$, and is one less otherwise.

\begin{table}
  \centering
  {\footnotesize
  \begin{tabular}{r|rrrrrrrrrrrrrrrrrrrrrr}
 & \multicolumn{22}{c}{$m$}\\
$n$ & 1 & 2 & 3 & 4 & 5 & 6 & 7 & 8 & 9 & 10 & 11 & 12 & 13 & 14 & 15 & 16 & 17 & 18 & 19 & 20 & 21 & 22 \\\hline
1 & -1 & -1 & 1 & 1 & -1 & -1 & 1 & 1 & -1 & -1 & 1 & 1 & -1 & -1 & 1 & 1 & -1 & -1 & 1 & 1 & -1 & -1 \\
2 & -1 & -1 & 1 & 1 & -1 & -1 & 1 & 1 & -1 & -1 & 1 & 1 & -1 & -1 & 1 & 1 & -1 & -1 & 1 & 1 & -1 & -1 \\
3 & 1 & 1 & 1 & 1 & 1 & 1 & 1 & 1 & 1 & 1 & 1 & 1 & 1 & 1 & 1 & 1 & 1 & 1 & 1 & 1 & 1 & 1 \\
4 & 1 & 1 & 1 & 1 & 1 & 1 & 1 & 1 & 1 & 1 & 1 & 1 & 1 & 1 & 1 & 1 & 1 & 1 & 1 & 1 & 1 & 1 \\
5 & -1 & -1 & 1 & 1 & -1 & -1 & 1 & 1 & -1 & -1 & 1 & 1 & -1 & -1 & 1 & 1 & -1 & -1 & 1 & 1 & -1 & -1 \\
6 & -1 & -1 & 1 & 1 & -1 & -1 & 1 & 1 & -1 & -1 & 1 & 1 & -1 & -1 & 1 & 1 & -1 & -1 & 1 & 1 & -1 & -1 \\
7 & 1 & 1 & 1 & 1 & 1 & 1 & 1 & 1 & 1 & 1 & 1 & 1 & 1 & 1 & 1 & 1 & 1 & 1 & 1 & 1 & 1 & 1 \\
8 & 1 & 1 & 1 & 1 & 1 & 1 & 1 & 1 & 1 & 1 & 1 & 1 & 1 & 1 & 1 & 1 & 1 & 1 & 1 & 1 & 1 & 1 \\
9 & -1 & -1 & 1 & 1 & -1 & -1 & 1 & 1 & -1 & -1 & 1 & 1 & -1 & -1 & 1 & 1 & -1 & -1 & 1 & 1 & -1 & -1 \\
10 & -1 & -1 & 1 & 1 & -1 & -1 & 1 & 1 & -1 & -1 & 1 & 1 & -1 & -1 & 1 & 1 & -1 & -1 & 1 & 1 & -1 & -1 \\
11 & 1 & 1 & 1 & 1 & 1 & 1 & 1 & 1 & 1 & 1 & 1 & 1 & 1 & 1 & 1 & 1 & 1 & 1 & 1 & 1 & 1 & 1 \\
12 & 1 & 1 & 1 & 1 & 1 & 1 & 1 & 1 & 1 & 1 & 1 & 1 & 1 & 1 & 1 & 1 & 1 & 1 & 1 & 1 & 1 & 1 \\
13 & -1 & -1 & 1 & 1 & -1 & -1 & 1 & 1 & -1 & -1 & 1 & 1 & -1 & -1 & 1 & 1 & -1 & -1 & 1 & 1 & -1 & -1 \\
14 & -1 & -1 & 1 & 1 & -1 & -1 & 1 & 1 & -1 & -1 & 1 & 1 & -1 & -1 & 1 & 1 & -1 & -1 & 1 & 1 & -1 & -1 \\
15 & 1 & 1 & 1 & 1 & 1 & 1 & 1 & 1 & 1 & 1 & 1 & 1 & 1 & 1 & 1 & 1 & 1 & 1 & 1 & 1 & 1 & 1 \\
16 & 1 & 1 & 1 & 1 & 1 & 1 & 1 & 1 & 1 & 1 & 1 & 1 & 1 & 1 & 1 & 1 & 1 & 1 & 1 & 1 & 1 & 1 \\
17 & -1 & -1 & 1 & 1 & -1 & -1 & 1 & 1 & -1 & -1 & 1 & 1 & -1 & -1 & 1 & 1 & -1 & -1 & 1 & 1 & -1 & -1 \\
18 & -1 & -1 & 1 & 1 & -1 & -1 & 1 & 1 & -1 & -1 & 1 & 1 & -1 & -1 & 1 & 1 & -1 & -1 & 1 & 1 & -1 & -1 \\
19 & 1 & 1 & 1 & 1 & 1 & 1 & 1 & 1 & 1 & 1 & 1 & 1 & 1 & 1 & 1 & 1 & 1 & 1 & 1 & 1 & 1 & 1 \\
20 & 1 & 1 & 1 & 1 & 1 & 1 & 1 & 1 & 1 & 1 & 1 & 1 & 1 & 1 & 1 & 1 & 1 & 1 & 1 & 1 & 1 & 1 \\
21 & -1 & -1 & 1 & 1 & -1 & -1 & 1 & 1 & -1 & -1 & 1 & 1 & -1 & -1 & 1 & 1 & -1 & -1 & 1 & 1 & -1 & -1 \\
22 & -1 & -1 & 1 & 1 & -1 & -1 & 1 & 1 & -1 & -1 & 1 & 1 & -1 & -1 & 1 & 1 & -1 & -1 & 1 & 1 & -1 & -1
  \end{tabular}
  }
\caption{Values of $P_{m \times n}(-1)$ from exact enumeration for the
  king graph $K_{m\times n}$, showing the pattern $(-1)^{\lceil m/2\rceil \lceil n/2\rceil}$.}
\label{tab:kings_skewness}
\end{table}

\section{Matching dominating sets of opposite parity}

\begin{figure}[ht]
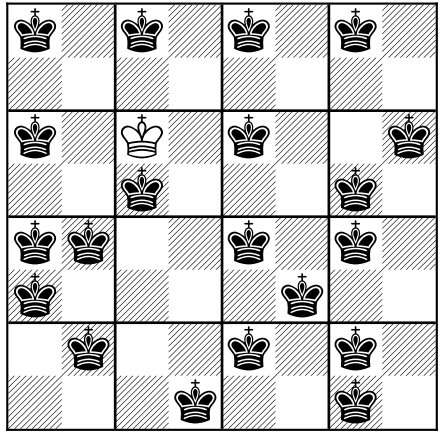

  \begin{center}
%    \fenboard{k1k1k1k1/8/k1K1k2k/2k5/1k4k1/3k1k2/k5k1/3k4 - - - 0 0}
      \chessboard[setfen=k1k1k1k1/8/k1K1k2k/2k3k1/kk2k1k1/k4k2/1k2k1k1/3k2k1 - - - 0 0,
    label=false, showmover=false,
%    emphstyle=\color{blue},
%    emphareas={a5-b6, a1-b2, c7-d8, c3-d4, e5-f6, e1-f2, g7-h8, g3-h4}
pgfstyle=bottomborder,
linewidth=0.1ex,
markregion=a7-h8,
markregion=a5-h6,
markregion=a3-h4,
pgfstyle=rightborder,
linewidth=0.1ex,
markregion=a1-b8,
markregion=c1-d8,
markregion=e1-f8,
    ]
%    $$\showboard$$
  \end{center} 
\caption{The matching between almost all dominating sets. We scan from left to right and top to bottom until we find the first $2 \times 2$ square where the upper-right, lower-left, or lower-right corner is occupied. We then flip that square's upper-left corner (the white king), adding or removing it from the set. This produces another dominating set of opposite parity.}
\label{fig:kings-matching}
\end{figure}

\begin{figure}[ht]
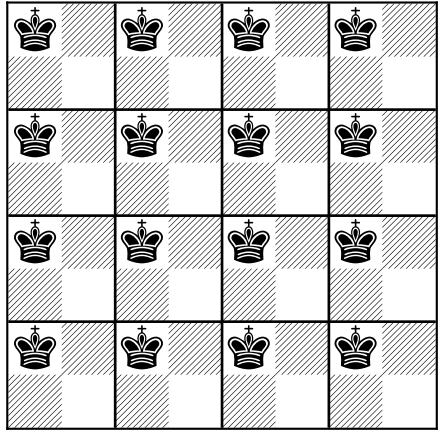

  \begin{center}
%    \fenboard{k1k1k1k1/8/k1k1k1k1/8/k1k1k1k1/8/k1k1k1k1/8 - - - 0 0}
          \chessboard[setfen=k1k1k1k1/8/k1k1k1k1/8/k1k1k1k1/8/k1k1k1k1/8 - - - 0 0,
    label=false, showmover=false,
    pgfstyle=bottomborder,
linewidth=0.1ex,
markregion=a7-h8,
markregion=a5-h6,
markregion=a3-h4,
pgfstyle=rightborder,
linewidth=0.1ex,
markregion=a1-b8,
markregion=c1-d8,
markregion=e1-f8,
    ]
%    $$\showboard$$
  \end{center} 
\caption{The one dominating set that has no partner in our proof of Theorem~\ref{the:skewness}. It has size 16, hence $P_{8 \times 8}(-1)=(-1)^{16}=1$.}
\label{fig:kings-leftover}
\end{figure}

\begin{theorem}
  \label{the:skewness}
  Let $P_{m \times n}(z)$ denote the dominating polynomial of the $m \times n$ king graph $K_{m \times n}$. Then
  \begin{equation}
    \label{eq:skewness}
    P_{m \times n}(-1) = (-1)^{\lceil m/2 \rceil \lceil n/2 \rceil}\,.
  \end{equation}
\end{theorem}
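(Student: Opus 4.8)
The plan is to exhibit a sign-reversing involution on the set of dominating sets of $K_{m\times n}$, in the spirit of Figures~\ref{fig:kings-matching} and~\ref{fig:kings-leftover}. Writing $P_{m\times n}(-1)=\sum_{S\text{ dominating}}(-1)^{|S|}$, any two dominating sets matched by an involution that changes $|S|$ by $\pm1$ together contribute $(-1)^{|S|}+(-1)^{|S|\pm1}=0$, so $P_{m\times n}(-1)$ reduces to a sum of $\pm1$ over the fixed points. I will build the involution so that there is exactly one fixed point, of size $\lceil m/2\rceil\lceil n/2\rceil$, which immediately gives \eqref{eq:skewness}.

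\emph{Set-up.} Partition the board into $2\times2$ blocks, the last block-row degenerating to a single row if $m$ is odd and the last block-column to a single column if $n$ is odd; there are $\lceil m/2\rceil\lceil n/2\rceil$ blocks. Call the top-left cell of a block its \emph{anchor}; the anchors are precisely the cells both of whose coordinates are odd, and I write $A$ for the set of all of them. Two easy observations will be used repeatedly: every neighbor of an anchor is a non-anchor (it has an even coordinate), while any two cells lying in a common block are adjacent; and every cell of the board is equal or adjacent to some anchor (round each coordinate down to the nearest odd value). The latter shows $A$ is a dominating set, and since an anchor is adjacent to no other anchor, each anchor is the only element of $A$ that can dominate it, so $A$ is in fact the \emph{unique} dominating set contained in $A$.

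\emph{The involution.} Say a block is \emph{active} for $S$ if $S$ contains a non-anchor cell of that block. Scan the blocks left to right within each block-row, block-rows from top to bottom, and let $B^\ast$ be the first active block, with anchor $a^\ast$; send $S$ to $S\triangle\{a^\ast\}$. Flipping an anchor changes $|S|$ by $1$ and changes the activity status of no block (activity depends only on non-anchor cells), so $B^\ast$ is still the first active block of the image; hence, provided the image is again a dominating set, this is a parity-reversing involution on the dominating sets having at least one active block. A dominating set with \emph{no} active block is one contained in $A$, i.e.\ (by the uniqueness above) the set $A$ itself, which is the sole fixed point; as $|A|=\lceil m/2\rceil\lceil n/2\rceil$, the theorem follows once the involution is shown to be well-defined.

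\emph{The main obstacle} is exactly that last point: that $S':=S\triangle\{a^\ast\}$ is again dominating. If $a^\ast\notin S$ this is trivial, since supersets of dominating sets are dominating. Suppose $a^\ast\in S$; the only cells that could lose domination in $S'=S\setminus\{a^\ast\}$ are $a^\ast$ and its neighbors. Now $a^\ast$ is dominated by whichever occupied non-anchor cell of $B^\ast$ witnesses that $B^\ast$ is active. A neighbor $v$ of $a^\ast$ lying inside $B^\ast$ is a non-anchor cell of $B^\ast$, hence adjacent to that same witness, so it stays dominated. Finally, a neighbor $v$ of $a^\ast$ \emph{outside} $B^\ast$ lies in one of the three blocks immediately above, to the left, or diagonally above-left of $B^\ast$---all of which precede $B^\ast$ in the scan and are therefore inactive, so $S$ meets $v$'s block $B_v$ only possibly in its anchor $a_v$. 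If $a_v\notin S$, then $B_v$ is entirely empty, and then $a_v$ itself would be undominated in $S$: all of $a_v$'s neighbors lie in blocks preceding $B^\ast$, and the only cells of $S$ in those blocks are their anchors, none of which is adjacent to $a_v$---contradicting that $S$ is dominating. So $a_v\in S$, and $a_v$, lying in the same block as $v$, dominates it. This covers every neighbor of $a^\ast$, and the identical bookkeeping goes through unchanged for the degenerate boundary blocks, which merely have fewer cells. I expect this domination-preservation step---and the careful block accounting it needs---to be the only real work; everything else is formal.
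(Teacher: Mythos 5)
Your proposal is correct and is essentially the paper's own argument: the same partition into $2\times2$ blocks with odd--odd anchors, the same scan for the first block occupied off its anchor, the same anchor flip defining a sign-reversing involution, and the same unique fixed point $A$ of size $\lceil m/2\rceil\lceil n/2\rceil$. You simply spell out in more detail the step the paper states tersely, namely that the anchors of the (inactive) blocks preceding $B^\ast$ must belong to $S$, so that the neighbors of $a^\ast$ outside $B^\ast$ remain dominated.
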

\noindent
%Note that \eqref{eq:skewness} reproduces Table~\ref{tab:kings_skewness}.

\begin{proof}
We use a matching strategy illustrated in Fig.~\ref{fig:kings-matching}. We divide the graph into $2 \times 2$ squares starting at its top-left corner. If $m$ or $n$ is odd, the squares on the right or bottom edge of the graph are truncated to $1 \times 2$, $2 \times 1$, or $1 \times 1$ squares. If $x \in [m]$ and $y \in [n]$ increase to the right and downward so that the top-left vertex of $K_{m \times n}$ is $(1,1)$, the top-left corners of the squares are the vertices $v=(x,y)$ where $x$ and $y$ are both odd.

Now, given a dominating set $S$, we scan the graph from left to right and top to bottom until we find the first $2 \times 2$ square where some vertex other than its top-left corner---that is, some vertex $(x,y)$ where $x$ or $y$ is even---is occupied by a king, i.e., is in $S$. Note that all the squares before that one have their top-left corners occupied, since otherwise their top-left square would be uncovered---i.e., neither occupied nor a neighbor of an occupied vertex---and $S$ would not be dominating. 

Now consider that square's top-left corner $v$, shown as a white king in Fig.~\ref{fig:kings-matching}. It is adjacent only to vertices in the previous squares, which are already covered by their top-left corners, and to the other three vertices in its own square, which are covered by one of the other vertices in that square. Thus we can ``flip'' $v$, adding or removing it from $S$, and the resulting set $S'$ will also be dominating. Applying the same rule gives $S''=S$, so this defines a partial matching of dominating sets. Since $|S|$ and $|S'|$ have opposite parity, the dominating sets with partners in this matching cancel in $P_{m \times n}(-1)$.

The only dominating set for which this procedure does not define a partner in this matching is the one shown in Fig.~\ref{fig:kings-leftover}, consisting of the top-left corner of each square. Call this set $T$. Since all other dominating sets cancel, we have $P_{m \times n}(-1) = (-1)^{|T|}$. Noting that $|T| = (-1)^{\lceil m/2 \rceil \lceil n/2 \rceil}$ completes the proof.
\end{proof}

\section{King graphs on cylinders and tori}

Our proof of Theorem~\ref{the:skewness} relies on free boundary conditions. If we consider the king graph on a cylinder or torus by making $x$ and/or $y$ cyclic, our matching strategy no longer works---there is no way to define the ``first'' $2 \times 2$ square. In fact, the statement of the theorem no longer holds. When we compute $P_{m \times n}(-1)$ by exact enumeration on cylinders and tori, we find it is no longer restricted to $\pm 1$ (see
Table~\ref{tab:kingtorus}).

\begin{table}
  \centering
  \begin{tabular}{r|rrrr}
 & \multicolumn{4}{c}{$m$}\\
$n$ & 3 & 4 & 5 & 6 \\\hline
3 & 5 & -3 & -1 & -1 \\
4 & 3 & 39 & 11 & 43 \\
5 & 9 & 33 & -1 & -1 \\
6 & 15 & -13 & -1 & 11
  \end{tabular}
  \hspace{20mm}
\begin{tabular}{r|rrrr}
 & \multicolumn{4}{c}{$m$}\\
$n$ & 3 & 4 & 5 & 6 \\\hline
3 & -1 & 3 & -1 & -1 \\
4 & 3 & 63 & 3 & 51 \\
5 & -1 & 3 & -1 & -1 \\
6 & -1 & 51 & -1 & 11
\end{tabular}
  \caption{Values of $P_{m \times n}(-1)$ for the king graph on the cylinder where $x \in [m]$ is cyclic (left) and on the torus where both $x\in[m]$ and $y\in[n]$ are cyclic (right).}
  \label{tab:kingtorus}
\end{table}

\section{Higher dimensions}

The king graph can be generalized to other dimensions, including chess in three and more dimensions~\cite{pritchard:07}.
%(\url{https://en.wikipedia.org/wiki/Three-dimensional_chess}).
Given $n_1, \ldots, n_d \ge 0$, we define $K_{n_1\times n_2 \times \cdots \times n_d}$ as $G=(V,E)$ where $V=[n_1] \times \cdots [n_d]$ and $(u,v) \in E$ if $\max_{i \in [d]} |u_i-v_i| = 1$. Equivalently, 
$K_{n_1\times n_2 \times \cdots \times n_d}$ is the strong graph product of $d$ path graphs of size $n_1,\ldots,n_d$. Then we have the following:

\begin{theorem}
 \label{the:skewness}
  Let $d \ge 1$ and left $P_{n_1\times n_2 \times \cdots \times n_d}(z)$ denote the dominating polynomial of the $d$-dimensional king graph $K_{n_1\times n_2 \times \cdots \times n_d}$. Then
  \begin{equation}
    \label{eq:skewness}
    P_{n_1\times n_2 \times \cdots \times n_d}(-1) = (-1)^{\lceil n_1/2 \rceil \lceil n_2/2 \rceil \cdots \lceil n_d/2 \rceil} \, .
  \end{equation}
\end{theorem}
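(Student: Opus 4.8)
The plan is to rerun the matching argument from the proof of the two-dimensional case, essentially verbatim, in $d$ dimensions, with the $2\times 2$ squares replaced by $d$-dimensional blocks of side at most $2$. Concretely, I index blocks by $\mathbf{b}=(b_1,\dots,b_d)$ with $1\le b_i\le\lceil n_i/2\rceil$, and set $B_{\mathbf b}=\{(x_1,\dots,x_d): x_i\in\{2b_i-1,2b_i\}\cap[n_i]\}$. These blocks partition $V$, each has between $1$ and $2^d$ vertices, and the \emph{corner} $c_{\mathbf b}=(2b_1-1,\dots,2b_d-1)$, the vertex of $B_{\mathbf b}$ all of whose coordinates are odd, always lies in $B_{\mathbf b}$. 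I order the blocks lexicographically by $\mathbf b$. The leftover set will again be $T=\{c_{\mathbf b}: \text{all }\mathbf b\}$, the set of all corners, whose size is $\prod_{i=1}^d\lceil n_i/2\rceil$.

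Two structural facts carry the whole argument. First, any two vertices of the same block differ by at most $1$ in every coordinate, so each block is a clique of $K_{n_1\times\cdots\times n_d}$; hence $c_{\mathbf b}$ dominates all of $B_{\mathbf b}$, so $T$ is dominating. Second, I claim every neighbour of $c_{\mathbf b}$ lies in $B_{\mathbf b}$ or in a block $\mathbf b'<_{\mathrm{lex}}\mathbf b$: a neighbour is $c_{\mathbf b}+\delta$ for some $\delta\in\{-1,0,1\}^d\setminus\{\mathbf 0\}$, and the block containing $c_{\mathbf b}+\delta$ agrees with $\mathbf b$ in every coordinate where $\delta_i\ge 0$ and equals $b_i-1$ in every coordinate where $\delta_i=-1$; so that block is $\le_{\mathrm{lex}}\mathbf b$, with equality exactly when $\delta\in\{0,1\}^d$, i.e.\ when the neighbour is in $B_{\mathbf b}$. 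In particular the corners of two distinct blocks are never adjacent, since they differ by $2$ in some coordinate.

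Given these facts, the matching is exactly the one in the two-dimensional proof. Let $S$ be a dominating set. Scanning blocks in lex order, if $S$ contains no non-corner vertex then, by the second fact, each corner $c_{\mathbf b}$ is covered only by itself or by non-corner vertices of $B_{\mathbf b}$ and of earlier blocks; an induction along the lex order forces every $c_{\mathbf b}\in S$, so $S=T$. Otherwise let $\mathbf b$ be the first block containing a non-corner vertex $w\in S\cap B_{\mathbf b}$; then every earlier block contains only its corner, and those corners must all be in $S$ by the same covering argument. Set $S'=S\mathbin{\triangle}\{c_{\mathbf b}\}$. The only vertices whose domination status could change are $c_{\mathbf b}$ and its neighbours, and by the second fact these lie either in $B_{\mathbf b}$, still dominated by $w\in S'$ via the clique property, or in an earlier block, still dominated by that block's untouched corner; so $S'$ is dominating. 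Flipping $S'$ again locates the same first impure block $\mathbf b$, since $w\in S'\cap B_{\mathbf b}$ and earlier blocks are unchanged, and returns $S$, so this is an involution pairing dominating sets of opposite parity. Hence every dominating set but $T$ cancels in $P_{n_1\times\cdots\times n_d}(-1)$, giving $P_{n_1\times\cdots\times n_d}(-1)=(-1)^{|T|}=(-1)^{\prod_i\lceil n_i/2\rceil}$; degenerate cases with some $n_i\in\{0,1\}$ are absorbed automatically.

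The only step that genuinely uses the geometry, and hence the only place where the "picture" in the two-dimensional proof must be replaced by an argument, is the second structural fact: that the corner of a block is adjacent only to vertices of its own block and of lexicographically earlier blocks. This is what makes the flip local and the pairing an involution; once it is in hand, the rest is a transcription of the two-dimensional case. I expect no real obstacle beyond keeping the block/coordinate bookkeeping straight and checking the lex-order comparison carefully.
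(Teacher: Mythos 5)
Your proof is correct and follows exactly the paper's approach: partition into $2\times\cdots\times 2$ blocks (truncated at odd boundaries), flip the all-odd corner of the first block in scan order containing an occupied non-corner vertex, leaving only the set of all corners unmatched. The structural lemma you isolate---that a corner's neighbours lie only in its own block or in componentwise-earlier blocks---is exactly the fact the paper's sketch relies on implicitly, and your verification of it is sound.
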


\begin{proof}
The same matching strategy works, but now with $d$-dimensional $2 \times \cdots \times 2$ hypercubical blocks, truncated at the right, bottom, rear, et cetera boundary if the corresponding $n_i$ is odd. Each block has an upper-left-frontmost-et ceterest corner $v$ where $v_1, \ldots, v_d$ are all odd. Scan left to right, top to bottom, front to back, ana to kata~\cite{rucker:03} and so on, finding the first block where some vertex other than this $v$ is occupied. Then flipping $v$ gives another dominating set $S'$ of opposite parity. The only unmatched dominating set is the set $T$ consisting of all these corners $v$, and $|T|=\prod_{i=1}^d \lceil n_i / 2 \rceil$.
\end{proof}

\section{Conclusions}

\begin{table}
  \begin{center}
    {\footnotesize
   \begin{tabular}{r|rrrrrrrrrrrrrrrr}
 & \multicolumn{16}{c}{$m$}\\
$n$ & 1 & 2 & 3 & 4 & 5 & 6 & 7 & 8 & 9 & 10 & 11 & 12 & 13 & 14 & 15 & 16 \\\hline
1 & -1 & -1 & 1 & 1 & -1 & -1 & 1 & 1 & -1 & -1 & 1 & 1 & -1 & -1 & 1 & 1 \\
2 & -1 & 3 & -3 & 5 & -5 & 7 & -7 & 9 & -9 & 11 & -11 & 13 & -13 & 15 & -15 & 17 \\
3 & 1 & -3 & 1 & 5 & -3 & -3 & 5 & 1 & -3 & 1 & 1 & 1 & 1 & -3 & 1 & 5 \\
4 & 1 & 5 & 5 & 5 & 1 & 1 & -3 & 1 & 1 & 9 & 5 & 9 & 1 & 1 & -7 & 1 \\
5 & -1 & -5 & -3 & 1 & 3 & -1 & 9 & 13 & -1 & -5 & -3 & -3 & -13 & -9 & 5 & 5 \\
6 & -1 & 7 & -3 & 1 & -1 & 15 & -7 & 9 & 3 & 3 & -23 & 25 & -5 & -5 & -11 & 29 \\
7 & 1 & -7 & 5 & -3 & 9 & -7 & -3 & -3 & 1 & -11 & 17 & 1 & 1 & -15 & 21 & 13 \\
8 & 1 & 9 & 1 & 1 & 13 & 9 & -3 & 9 & 5 & 5 & 1 & 21 & -7 & 1 & 29 & 9 \\
9 & -1 & -9 & -3 & 1 & -1 & 3 & 1 & 5 & 23 & -13 & -3 & -7 & -9 & -9 & 29 & 21 \\
10 & -1 & 11 & 1 & 9 & -5 & 3 & -11 & 5 & -13 & 27 & -19 & 41 & 19 & 31 & -15 & 25 \\
11 & 1 & -11 & 1 & 5 & -3 & -23 & 17 & 1 & -3 & -19 & 1 & -15 & 17 & -11 & 9 & -35 \\
12 & 1 & 13 & 1 & 9 & -3 & 25 & 1 & 21 & -7 & 41 & -15 & 29 & -23 & 41 & -19 & 45 \\
13 & -1 & -13 & 1 & 1 & -13 & -5 & 1 & -7 & -9 & 19 & 17 & -23 & 19 & -37 & 21 & -35 \\
14 & -1 & 15 & -3 & 1 & -9 & -5 & -15 & 1 & -9 & 31 & -11 & 41 & -37 & 103 & -39 & 77 \\
15 & 1 & -15 & 1 & -7 & 5 & -11 & 21 & 29 & 29 & -15 & 9 & -19 & 21 & -39 & 53 & -107 \\
16 & 1 & 17 & 5 & 1 & 5 & 29 & 13 & 9 & 21 & 25 & -35 & 45 & -35 & 77 & -107 & 169
\end{tabular}}
  \end{center}
  \caption{$P_{m\times n}(-1)$ from exact enumeration for the
    $m\times n$ grid graph (or the wazir graph if you like Shatranj).}
  \label{tab:wazir}
\end{table}

From a physics point of view, the domination polynomial $P(z)$ is the partition function of a spin system at zero temperature where the energy is the number of non-dominated vertices. For the king graph, this system can also be thought of as sets of $3 \times 3$ squares (the kings' neighborhoods) which can overlap but which must cover every vertex. The parameter $z$ is then the fugacity of this system. We find it curious that this partition function can be computed exactly for $z=-1$. It would be wonderful if we could compute $P(z)$ at other values of real or complex $z$. For instance, a three-way grouping between sets of size $0$, $1$, and $2$ mod $3$ could determine $P(z)$ where $z$ is a cube root of $1$.

For what other systems does this kind of argument exist? As part of a study of percolation~\cite{mertens:moore:19}, we performed exact enumeration of spanning configurations on grid graphs of various sizes, i.e., sets of vertices that contain a path from one side of the grid to the other. These numerical results suggested that the number of such sets of odd and even size differ by $\pm 1$. To prove this, we defined a partial matching similar to the one in this paper by flipping a vertex to produce another spanning configuration of opposite parity, leaving a single configuration whose size determines $P(-1)$. As for kings, that matching generalizes to higher-dimensional cubic lattices, and to some other graphs as well. 

Is this phenomenon sporadic or widespread? We have already seen that, for the king graph, the matching strategy does not work on cylinders or tori. As a counterexample with free boundary conditions, in Table~\ref{tab:wazir} we show $P(-1)$ for grid graphs of various sizes, and see that $P(z)$ is not restricted to $\pm 1$. 

In chess terms these are wazir graphs, where the wazir can move only one step north, south, east, or west~\cite{fairy-chess}. The problem seems to be that the wazir does not have a partial neighborhood that tiles the grid in the same way the $2 \times 2$ square does. But perhaps a matching strategy could work on some region in the grid other than a rectangle---such as a region that can be tiled with right trominoes.

\section*{Acknowledgments}

We are grateful to the solar eclipse of April 2024 for bringing us back together. The title of this paper is inspired by a Sudoku designer known as oddlyeven.

%%%%%%%%%%%%%%%%%%%%%%%%%%%%%%%%%%%%%%%%%%%%%%%%%
% The following optional unnumbered section is where you put personal acknowledgements,
% research grant support, and similar things.  Do not put them on the front page.
%\subsection*{Acknowledgements}

%BIBLIOGRAPHY
% You do not have to use the same format for your references, but 
%    include everything in this file.
% If you use BibTeX to create a bibliography, copy the .bbl file into here.
% We recommend you use \doi{...} and \arxiv{...} like the examples below,
% as they give a short display form with an active link to the full url.

\bibliographystyle{unsrt}
\bibliography{math,mertens,cs}

\end{document}